\documentclass{article}

\usepackage[english]{babel}
\usepackage{amsmath,amssymb,amsthm,mathtools}
\usepackage[letterpaper,top=2cm,bottom=2cm,left=3cm,right=3cm,marginparwidth=1.75cm]{geometry}

\usepackage{amsmath}
\usepackage{graphicx}
\usepackage[colorlinks=true, allcolors=blue]{hyperref}
\theoremstyle{plain}
\newtheorem{theorem}{Theorem}[section]
\newtheorem{lemma}[theorem]{Lemma}
\newtheorem{proposition}[theorem]{Proposition}

\theoremstyle{definition}

\newtheorem{question}[theorem]{Question}
\theoremstyle{remark}

\title{\textbf{Convergence Analysis of Greedy Algorithms with Adaptive Relaxation in Hilbert Spaces}}
\author{
  Pablo M. Berná\thanks{Departamento de Matemáticas, CUNEF Universidad, 28040 Madrid (Spain), pablo.berna@cunef.edu}
  \and
  Andrea García\thanks{Universidad San Pablo CEU, CEU Universities, 28003 Madrid (Spain) and Departamento de Matemáticas, CUNEF Universidad, 28040 Madrid (Spain), andrea.garciapons@usp.ceu.es}
}

\date{}
\usepackage{hyperref}
\begin{document}
\maketitle

\begin{abstract}
The Power--Relaxed Greedy Algorithm (PRGA) was introduced as a generalization of the so called Relaxed Greedy Algorithm, introduced by DeVore and Temlyakov, by replacing the relaxation parameter $1/m$ with $1/m^\alpha$, with the aim of improving convergence rates. While the case $\alpha\le 1$ is well understood, the behavior of the algorithm for $\alpha>1$ remained an open problem. In this work, we answer this question and, moreover, we introduce a relaxed greedy algorithm with an optimal step size chosen by exact line search at each iteration. 
\end{abstract}
\section{Introduction}
Greedy algorithms have been widely studied in nonlinear approximation theory, especially in problems involving approximation with respect to redundant systems, commonly referred to as dictionaries (see for instance \cite{BT,DT}). In contrast to orthonormal bases, dictionaries do not require linear independence and therefore provide greater flexibility in representing elements of a Hilbert space. This added flexibility, however, makes the design of efficient approximation methods more challenging, and greedy algorithms offer a natural and intuitive approach in this context.

One of the earliest and most studied procedures is the Pure Greedy Algorithm. This algorithm constructs an approximation iteratively by selecting, at each step, the dictionary element that has the largest correlation with the current residual. The selected atom is then added to the approximation in a straightforward manner. Due to its simplicity, the Pure Greedy Algorithm is easy to implement and computationally attractive. Nevertheless, it is now well understood that, for general dictionaries, this method may lead to slow or non-optimal convergence rates.

To overcome these shortcomings, DeVore and Temlyakov proposed the Relaxed Greedy Algorithm. The main modification introduced by this approach is the use of a relaxation step, in which the new approximation is obtained as a convex combination of the previous approximation and the newly selected dictionary element. This relaxation reduces the aggressiveness of the update and results in a more stable approximation process. A key result associated with this algorithm is that it achieves optimal convergence rates for an important class of functions, namely those with bounded atomic norm.

The introduction of the Relaxed Greedy Algorithm represented an important step in the development of greedy approximation methods. It showed that an appropriate balance between greedy selection and relaxation can restore optimal performance even when working with highly redundant dictionaries. This idea has since played a role in the development of various generalized greedy algorithms and has also highlighted connections with optimization techniques such as the Frank–Wolfe algorithm.

The effectiveness of the Relaxed Greedy Algorithm naturally leads to further questions regarding the choice of the relaxation parameter and the possibility of alternative strategies that may improve performance in specific settings. These considerations have motivated several extensions and variants of the original algorithm and provide the motivation for the present work.

The structure of the paper is as follows. In Section \ref{intro}, we give the main definition fo the diferent algorithms and dicitionaries that we use in the paper. In Section \ref{three}, we present the proof of Theorem \ref{main}, which provides a negative answer regarding the convergence of the PRGA for values of $\alpha > 1$. In Section \ref{four}, we present a numerical simulation of the PRGA implemented in Python for different values of $\alpha > 1$, showing that, indeed, for such values the algorithm does not necessarily converge. Finally, in Section \ref{five}, we introduce the CRGA and establish a convergence result.

\section{Notation and main results}\label{intro}
In this section we introduce the main context of work: dictionaries in Hilbert spaces. Also, we give the exact definition of the Pure Greedy Algorithm and the Relaxed Greedy Algorithm to introduce the main algorithms that we study here.

In this paper, we consider a \textit{Hilbert space}: let $\mathbb H$ be a real or complex vector space endowed with an inner product
\[
\langle \cdot , \cdot \rangle : \mathbb H \times \mathbb H \to \mathbb R \, \text{or}\, \mathbb C,
\]
which satisfies the following properties for all $x,y,z \in \mathbb H$ and all scalars $\alpha \in \mathbb R$ or $\mathbb C$:
\begin{enumerate}
    \item Positivity: $\langle x,x\rangle \ge 0$, with equality if and only if $x=0$.
    \item Conjugate symmetry: $\langle x,y\rangle = \overline{\langle y,x\rangle}$.
    \item Homogeneity in the first argument: $\langle \alpha x, y\rangle = \alpha \langle x,y\rangle$.
    \item Linearity in the first argument: $\langle x+y, z\rangle = \langle x,z\rangle + \langle y,z\rangle$.
\end{enumerate}

The inner product induces a norm on $\mathbb H$ given by
\[
\|x\| = \sqrt{\langle x,x\rangle}.
\]
The space $\mathbb H$ is called a Hilbert space if it is complete with respect to the metric induced by this norm.

A \textit{dictionary} $\mathcal D \subset \mathbb H$ is a collection of elements satisfying the following conditions:
\begin{enumerate}
    \item Each element $g \in \mathcal D$, referred to as an \emph{atom}, has unit norm, i.e., $\|g\|=1$, and whenever $g \in \mathcal D$ one also has $-g \in \mathcal D$.
    \item The closed linear span of $\mathcal D$ is dense in $\mathbb H$. Equivalently, for every $x \in \mathbb H$ and every $\varepsilon > 0$, there exist elements $g_1,\dots,g_n \in \mathcal D$ and scalars $a_1,\dots,a_n$ such that
    \[
    \left\| x - \sum_{i=1}^n a_i g_i \right\| < \varepsilon.
    \]
\end{enumerate}

As a particular example, if $\{h_k\}_{k=1}^\infty$ is an orthonormal basis of $\mathbb H$, then the set $\mathcal D = \{\pm h_k\}_{k=1}^\infty$ constitutes a dictionary in the sense described above.

\subsection{The PGA and RGA}
Let $\mathbb H$ be a real Hilbert space and let $\mathcal D\subset \mathbb H$ be a dictionary. Associated with $\mathcal D$, and for $\tau>0$, we define the approximation class $A_\tau(\mathcal D)$ as the closure in $\mathbb H$ of the set
\[
\left\{ f\in\mathbb H:\ f=\sum_{k\in\Lambda} a_k g_k,\ \ g_k\in\mathcal D,\ |\Lambda|<\infty,\ \text{and}\ \sum_{k\in\Lambda}|a_k|^\tau\le 1\right\}.
\]

In order to introduce the new version of the Relaxed Greedy Algorithm studied in \cite{DT}, we briefly review first the Pure Greedy Algorithm.

Given $f\in\mathbb H$, let $g=g(f)\in\mathcal D$ be an element of the dictionary maximizing $\langle f,g\rangle$ (for simplicity, we assume that such a maximizer exists). We define
\[
G(f)=G(f,\mathcal D):=\langle f,g\rangle\, g,
\qquad
R(f)=R(f,\mathcal D):=f-G(f).
\]

\medskip
\noindent\textbf{Pure Greedy Algorithm (PGA).}
Set $R_0(f)=R_0(f,\mathcal D):=f$ and $G_0(f)=G_0(f,\mathcal D):=0$. For each $m\ge 1$, define recursively
\[
G_m(f)=G_m(f,\mathcal D):=G_{m-1}(f)+G\bigl(R_{m-1}(f)\bigr),
\]
\[
R_m(f)=R_m(f,\mathcal D):=f-G_m(f)=R\bigl(R_{m-1}(f)\bigr).
\]

\medskip
\noindent\textbf{Relaxed Greedy Algorithm (RGA).}
Set $R_0^r(f)=R_0^r(f,\mathcal D):=f$ and $G_0^r(f)=G_0^r(f,\mathcal D):=0$. For $m=1$ we take
\[
G_1^r(f):=G_1(f),\qquad R_1^r(f):=R_1(f).
\]
For $m\ge 2$, we define
\[
G_m^r(f)=G_m^r(f,\mathcal D):=\left(1-\frac{1}{m}\right)G_{m-1}^r(f)+\frac{1}{m}\, g\bigl(R_{m-1}^r(f)\bigr),
\]
\[
R_m^r(f)=R_m^r(f,\mathcal D):=f-G_m^r(f).
\]

Using this algorithm, \cite{DT} proves the following estimate.

\begin{theorem}\label{dtt}
Let $\mathcal D$ be a dictionary in a Hilbert space. Then, for every $f\in A_1(\mathcal D)$ and every $m=1,2,\dots$,
\[
\|f-G_m^r(f)\|\le \frac{2}{\sqrt{m}}.
\]
\end{theorem}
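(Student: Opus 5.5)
The plan is to prove, by induction on $m$, the squared bound $a_m:=\|f-G_m^r(f)\|^2\le 4/m$. First I will reduce to the case where $f$ is a finite combination $f=\sum_{k\in\Lambda}c_kg_k$ with $\sum|c_k|\le 1$. For general $f\in A_1(\mathcal D)$ I will approximate $f$ by such elements and keep the resulting $\varepsilon$ in every inequality, letting $\varepsilon\to0$ at the end. The key property used is that, for any $h\in\mathbb H$ and $g=g(h)$,
\[
\langle h,f\rangle=\sum_k c_k\langle h,g_k\rangle\le \sup_{g'\in\mathcal D}\langle h,g'\rangle=\langle h,g(h)\rangle .
\]
This uses the symmetry $\pm g\in\mathcal D$, which lets us absorb the signs of the $c_k$. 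I will also record that $\|f\|\le1$.

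For the base case, $R_1^r=f-\langle f,g\rangle g$, so $a_1=\|f\|^2-\langle f,g\rangle^2\le 1\le 4$.

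For $m\ge2$, write $R=R_{m-1}^r$, $G=G_{m-1}^r$ and $g=g(R)$. From the update rule,
\[
R_m^r=f-\bigl(1-\tfrac1m\bigr)G-\tfrac1m g=\bigl(1-\tfrac1m\bigr)R+\tfrac1m (f-g).
\]
Expanding the square gives
\[
a_m=\bigl(1-\tfrac1m\bigr)^2a_{m-1}+\tfrac{2}{m}\bigl(1-\tfrac1m\bigr)\langle R,f-g\rangle+\tfrac1{m^2}\|f-g\|^2 .
\]
By the key property applied with $h=R$, the cross term satisfies $\langle R,f-g\rangle\le 0$. Also $\|f-g\|\le \|f\|+1\le 2$. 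Therefore
\[
a_m\le \bigl(1-\tfrac1m\bigr)^2a_{m-1}+\tfrac{4}{m^2}.
\]

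Assuming $a_{m-1}\le 4/(m-1)$, we get
\[
a_m\le \frac{(m-1)^2}{m^2}\cdot\frac{4}{m-1}+\frac{4}{m^2}=\frac{4(m-1)+4}{m^2}=\frac4m ,
\]
which closes the induction. Taking square roots gives the stated bound $2/\sqrt m$.

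The main obstacle is the cross-term sign. In the closure case it holds only up to $\varepsilon\|R\|$, so I would carry an $O(\varepsilon)$ error through the finite recursion and then let $\varepsilon\to0$ for each fixed $m$. Another point to check is that $g(R)$ maximizes $\langle R,g\rangle$ rather than $|\langle R,g\rangle|$. This does not matter, because the dictionary is symmetric, so the supremum of $\langle R,g\rangle$ is nonnegative and dominates every $\langle R,\pm g_k\rangle$.
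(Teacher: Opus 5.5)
Your proof is correct. The paper does not prove Theorem~\ref{dtt} itself; it quotes the result from \cite{DT}. What you wrote is essentially the standard DeVore--Temlyakov argument.

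Within the paper, the natural comparison is the proof of Theorem~\ref{thcrga}, and your ingredients match it. Your key property $\langle R,f\rangle\le\langle R,g(R)\rangle$ is the first inequality of Step~1 there. Your cross-term bound $\langle R,f-g\rangle\le 0$ is \eqref{gap} with $\langle r_{m-1},f-f_{m-1}\rangle=a_{m-1}$ subtracted from both sides. Your bound $\|f-g\|\le 2$ plays the role of \eqref{dm}.

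The real difference is the step size. The fixed step $1/m$ yields $a_m\le(1-\frac1m)^2a_{m-1}+\frac4{m^2}$, which closes directly under the inductive hypothesis $a_{m-1}\le 4/(m-1)$. The line search instead lets the paper take $\gamma=a_{m-1}/4$, obtain $a_m\le a_{m-1}-a_{m-1}^2/4$, and conclude by the reciprocal trick, giving the slightly sharper $4/(m+4)$.

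One simplification: the $\varepsilon$-bookkeeping for the closure is unnecessary. For fixed $h$ the map $f\mapsto\langle h,f\rangle$ is continuous, so $\langle h,f\rangle\le\sup_{g\in\mathcal D}\langle h,g\rangle$ holds exactly for every $f\in A_1(\mathcal D)$. Since the algorithm is always run on $f$ itself and never on the approximants, no error term has to be carried through the recursion.
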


\subsection{Power-Relaxed Greedy Algorithm (PRGA)}
A natural question arising from the definition of the RGA is the following: why is the choice $1/m$ used in the convex combination, rather than $1/m^\alpha$ with $\alpha$ an arbitrary positive real number? Could such a power improve the convergence result proved by DeVore and Temlyakov?

This natural question motivated A. García in \cite{G} to introduce the following generalization of the RGA, which is called the Power-Relaxed Greedy Algorithm.

Set $\mathcal R_0^{r}(f)=\mathcal R_0^{r}(f,\mathcal D):=f$ and $\mathcal T_0^{r}(f)=\mathcal T_0^{r}(f,\mathcal D):=0$. For $m=1$ we define
\[
\mathcal T_1^{r}(f):=G_1(f),\qquad \mathcal R_1^{r}(f):=R_1(f).
\]
For $m\ge 2$, we set
\[
\mathcal T_m^{r}(f)=\mathcal T_m^{r}(f,\mathcal D):=
\left(1-\frac{1}{m^\alpha}\right)\mathcal T_{m-1}^{r}(f)
+\frac{1}{m^\alpha}\, g\bigl(\mathcal R_{m-1}^{r}(f)\bigr),
\]
\[
\mathcal R_m^{r}(f)=\mathcal R_m^{r}(f,\mathcal D):=f-\mathcal T_m^{r}(f).
\]

The convergence of this algorithm is described in the following result, proved in \cite{G}.

\begin{theorem}\label{than}
Let $\mathcal D$ be a dictionary in a Hilbert space. If $\alpha\le 1$, then for every $f\in A_1(\mathcal D)$ and every $m=1,2,\dots$,
\[
\|f-\mathcal T_m^r(f)\|^2\le \frac{4}{m^{\alpha}}.
\]
\end{theorem}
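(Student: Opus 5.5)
Set $r_m:=\|f-\mathcal T_m^r(f)\|^2$ and $\lambda_m:=m^{-\alpha}$ with $\alpha\le 1$. The plan is to derive a one-step recursion for $r_m$ in the style of DeVore--Temlyakov and then close it by induction. Write $R:=\mathcal R_{m-1}^r(f)$ and $g:=g(R)$. For $m\ge 2$,
\[
\mathcal R_m^r(f)=(1-\lambda_m)R+\lambda_m(f-g).
\]
Expanding the square gives
\[
r_m=(1-\lambda_m)^2 r_{m-1}+2\lambda_m(1-\lambda_m)\langle R,f-g\rangle+\lambda_m^2\|f-g\|^2 .
\]
Two standard facts control the cross and last terms. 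First, since $f\in A_1(\mathcal D)$ and $g$ maximizes $\langle R,\cdot\rangle$ over $\mathcal D$, we have $\langle R,f\rangle\le \sup_{h\in\mathcal D}\langle R,h\rangle=\langle R,g\rangle$, so $\langle R,f-g\rangle\le 0$. This first holds for finite combinations with $\sum|a_k|\le1$ (using $-g\in\mathcal D$) and then passes to the closure by continuity. Second, $\|f\|\le 1$ and $\|g\|=1$, so $\|f-g\|^2\le 4$. Together these give the recursion
\[
r_m\le (1-\lambda_m)^2 r_{m-1}+4\lambda_m^2 .
\]

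For the base case, $r_1=\|f\|^2-\langle f,g\rangle^2\le 1\le 4$, so $r_1\le 4/1^\alpha$. For the inductive step, assume $r_{m-1}\le 4(m-1)^{-\alpha}$. It then suffices to prove
\[
(1-m^{-\alpha})^2 (m-1)^{-\alpha}+m^{-2\alpha}\le m^{-\alpha}.
\]
Multiplying by $m^{\alpha}$ and setting $t:=\left(\frac{m}{m-1}\right)^{\alpha}$, this becomes
\[
(1-m^{-\alpha})^2\,t+m^{-\alpha}\le 1 .
\]
Let $x=m^{-\alpha}\in(0,1)$. The condition reads $(1-x)^2 t\le 1-x$, that is, $t(1-x)\le 1$, i.e. $\left(\frac{m}{m-1}\right)^{\alpha}(1-m^{-\alpha})\le1$.

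Verifying this last inequality is the key step, and it is exactly where $\alpha\le1$ enters. It is equivalent to $m^\alpha-1\le (m-1)^\alpha$, i.e. $m^\alpha\le (m-1)^\alpha+1$. For $0<\alpha\le1$ this holds because $s\mapsto s^\alpha$ is subadditive on $[0,\infty)$, being concave with value $0$ at $0$. Indeed, $m^\alpha=((m-1)+1)^\alpha\le (m-1)^\alpha+1^\alpha$. This closes the induction and gives $r_m\le 4m^{-\alpha}$ for all $m$.

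The algebra is routine, and the only real obstacle is noticing that the inductive condition simplifies to subadditivity of $s^\alpha$. It is also worth noting that for $\alpha>1$ the inequality $m^\alpha\le(m-1)^\alpha+1$ fails, which is consistent with the paper's later claim that the PRGA need not converge in that regime.
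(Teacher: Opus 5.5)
The paper does not prove this statement; it is quoted from \cite{G}, so there is no in-paper argument to compare against.

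Your proof is correct. It is the DeVore--Temlyakov scheme:
\begin{itemize}
\item the residual identity $\mathcal R_m^r(f)=(1-\lambda_m)\mathcal R_{m-1}^r(f)+\lambda_m(f-g)$;
\item the greedy inequality $\langle R,f\rangle\le\langle R,g\rangle$, which is the same inequality the paper uses in Step~1 of its CRGA proof;
\item the bound $\|f-g\|\le 2$.
\end{itemize}
Together these give $r_m\le(1-\lambda_m)^2r_{m-1}+4\lambda_m^2$. Your reduction of the inductive step to $m^\alpha\le(m-1)^\alpha+1$ is right, and subadditivity of $s\mapsto s^\alpha$ settles it. An equivalent check is $(1-1/m)^\alpha\ge 1-1/m\ge 1-m^{-\alpha}$.

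\textbf{One implicit step.} Dropping the cross term requires its coefficient $2\lambda_m(1-\lambda_m)$ to be nonnegative, that is, $\lambda_m\in[0,1]$. So the hypothesis must be read as $0<\alpha\le 1$. For $\alpha<0$ one has $\lambda_m>1$ for $m\ge2$, and the recursion as you derived it no longer follows. You do invoke $\alpha>0$ later, when you take $x=m^{-\alpha}\in(0,1)$. It should also be stated at the point where the recursion is derived.

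\textbf{Your closing remark.} It is only heuristic. Failure of the inductive inequality for $\alpha>1$ says nothing by itself about the algorithm. A sharper observation is available: for $\alpha>1$, iterating your recursion can never give $r_m\to0$, because its homogeneous part contributes at least $\prod_{k\ge2}(1-k^{-\alpha})^2\, r_1$, and this product is positive. That positivity is exactly Lemma~\ref{technical}, which drives the paper's counterexample in Theorem~\ref{main}.
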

Of course, as this result shows, for $\alpha \leq 1$ the optimal case is $\alpha = 1$, thereby recovering the RGA and the result proved by DeVore and Temlyakov. The question then arises: what happens for $\alpha > 1$? This is precisely the question raised in \cite{G}.

\begin{question}[{\cite[Question 1]{G}}]
Is it possible to find $\alpha_0 > 1$ and $c>0$ such that
\[
\|f - \mathcal T^r_m(f)\| \leq \frac{c}{m^{\alpha_0/2}}, 
\qquad m = 1, 2, \ldots,
\]
for every $f \in A_1(\mathcal D)$?
\end{question}

Here, we answer this question in the negative sense.

\begin{theorem}\label{main}
Taking $\alpha>1$ for the (PRGA), there exists an element $f\in A_1(\mathcal D)$ with $\mathcal D$ a particular dictionary of $\mathbb R^2$ such that 
$$\inf_m\Vert f-\mathcal T_m^r(f)\Vert >0.$$
\end{theorem}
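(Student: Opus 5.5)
The plan is to exploit the fact that for $\alpha>1$ the relaxation weights $m^{-\alpha}$ are summable. As a consequence, the first greedy step $\mathcal T_1^r(f)$ keeps a weight bounded away from zero in every later iterate, and we will choose $f$ so that this weight prevents the iterates from reaching $f$.

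\emph{Step 1: the iterates as convex combinations.} Unrolling the recursion, for $m\ge 2$ we will show by induction that
\[
\mathcal T_m^r(f)=w_1^{(m)}\,\mathcal T_1^r(f)+\sum_{k=2}^m w_k^{(m)}\, g_k ,
\]
where $g_k=g(\mathcal R_{k-1}^r(f))\in\mathcal D$ and the weights are
\[
w_1^{(m)}=\prod_{j=2}^m\Bigl(1-\frac1{j^\alpha}\Bigr),\qquad w_k^{(m)}=\frac1{k^\alpha}\prod_{j=k+1}^m\Bigl(1-\frac1{j^\alpha}\Bigr).
\]
These weights are nonnegative and sum to $1$. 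Every factor $1-j^{-\alpha}$ with $j\ge 2$ is strictly positive, and $\sum_j j^{-\alpha}<\infty$ because $\alpha>1$. Hence the infinite product converges to some $P>0$, and $w_1^{(m)}\ge P$ for all $m$. This is the only place where $\alpha>1$ is used.

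\emph{Step 2: the example.} Take $\mathbb R^2$ with the dictionary $\mathcal D=\{\pm e_1,\pm e_2\}$, and let $f=(2/3,1/3)$. Then $f\in A_1(\mathcal D)$, since its coefficients have $\ell^1$ norm equal to $1$. The maximizer of $\langle f,g\rangle$ is uniquely $e_1$, so there is no tie to resolve, and $\mathcal T_1^r(f)=(2/3,0)$. Writing $B$ for the convex hull of $\mathcal D$, which is the closed $\ell^1$ unit ball, Step 1 shows that each iterate lies in
\[
K=\{\,w\,(2/3,0)+(1-w)h:\ w\in[P,1],\ h\in B\,\}.
\]
The set $K$ is compact, being the continuous image of $[P,1]\times B$.

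\emph{Step 3: $f\notin K$.} Suppose $f=w(2/3,0)+(1-w)h$ with $w\in[P,1]$. The case $w=1$ is impossible, because the second coordinate of $f$ is $1/3\neq 0$. For $w<1$, solving for $h$ gives
\[
h=\Bigl(\frac23,\ \frac{1}{3(1-w)}\Bigr),
\]
so $\|h\|_{\ell^1}=\tfrac23+\tfrac1{3(1-w)}>1$ since $w>0$. This contradicts $h\in B$, so $f\notin K$. By compactness, $\operatorname{dist}(f,K)>0$, and therefore
\[
\inf_m\|f-\mathcal T_m^r(f)\|\ge \operatorname{dist}(f,K)>0 .
\]

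\emph{Main obstacle.} The only delicate point is Step 1: recognizing that $\mathcal T_1^r(f)$ retains a uniformly positive share $P$ of the combination, rather than trying to bound the total distance the iterates move. A movement bound such as $2\sum_{m\ge2}m^{-\alpha}$ can be larger than $\|f-\mathcal T_1^r(f)\|$ when $\alpha$ is close to $1$, so it would not give a single example working for all $\alpha>1$. The weight argument does not depend on the size of $P$, only on $P>0$, so it covers every $\alpha>1$. An explicit lower bound for the distance could be obtained by minimizing over $w\in[P,1]$, but it is not needed for the statement.
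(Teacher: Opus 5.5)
Your proof is correct and follows essentially the paper's argument: the same dictionary, the same $f$ with $b=1/3$, and your persistent weight $w_1^{(m)}\ge P_\alpha$ on $\mathcal T_1^r(f)$ is exactly what the paper's recursion $u_m\ge(1-\lambda_m)u_{m-1}$ encodes, namely that every iterate stays in the $\ell^1$ ball of radius $1-bP_\alpha$ while $\|f\|_1=1$. The only difference is the last step: the paper separates $f$ from the iterates with the functional $v=(1,1)/\sqrt2$ and gets the explicit bound $bP_\alpha/\sqrt2$, whereas your compactness argument shows the distance is positive without giving a constant.
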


\subsection{The Convex-Relaxed Greedy Algorithm}
As we have seen previously, in the definition of the RGA, we use a convex combination with the factor $1/m$ between $G_{m-1}^r(f)$ and $g(R_{m-1}^r(f))$. The question we ask here is: what if we change the factor $1/m$ and try to study what the best convex combination would be? To address this question, we introduce the following algorithm, which we call the \textbf{Convex-Relaxed Greedy Algorithm} (CRGA): let $\mathcal D$ be a symmetric dictionary in a real Hilbert space $\mathbb H$, and let $f\in\mathbb H$. We define
\[
f_0:=0, \qquad r_0:=f.
\]

\noindent  For each $m\geq 1$, assuming that $f_{m-1}$ and $r_{m-1}$ are defined, we proceed as follows:

\begin{enumerate}
    \item \textbf{Greedy selection.} We choose $g_m\in\mathcal D$ such that
    \[
    \langle r_{m-1}, g_m\rangle
    =\sup_{g\in\mathcal D}\langle r_{m-1}, g\rangle.
    \]

    \item \textbf{Optimal relaxation step.} We define
    \[
    f_m=(1-\gamma_m)f_{m-1}+\gamma_m g_m,
    \]
    where the parameter $\gamma_m\in[0,1]$ is chosen as a solution of the problem
    \[
    \gamma_m \in \arg\min_{\gamma\in[0,1]}
    \left\| f-\bigl((1-\gamma)f_{m-1}+\gamma g_m\bigr)\right\|^2.
    \]

    \item \textbf{Residual.} We define
    \[
    r_m:=f-f_m.
    \]
\end{enumerate}
\textcolor{black}{Observe that, by construction, $f_m \in \operatorname{conv}(\mathcal D)$ for all $m$.} For the CRGA, we have the following result.

\begin{theorem}\label{thcrga}
Let $\mathcal D$ be a dictionary in a real Hilbert space. If
$f\in A_1(\mathcal D)$, then the sequence $(f_m)_{m\ge 0}$ generated by the previous algorithm satisfies
\[
\|f-f_m\| \le \frac{2}{\sqrt{m+4}},
\qquad m=0,1,2,\ldots
\]
\end{theorem}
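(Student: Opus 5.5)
The plan is to track the squared residual norm $a_m:=\|r_m\|^2=\|f-f_m\|^2$ and derive the recursion $a_m\le \frac{4a_{m-1}}{a_{m-1}+4}$. Equivalently, $\frac{1}{a_m}\ge \frac{1}{a_{m-1}}+\frac14$. Together with $a_0=\|f\|^2\le 1$, this gives $\frac{1}{a_m}\ge 1+\frac m4=\frac{m+4}{4}$, which is exactly $\|f-f_m\|\le 2/\sqrt{m+4}$. The case $m=0$ is just $\|f\|\le 1$. This holds because $f\in A_1(\mathcal D)$ is a limit of sums $\sum a_kg_k$ with $\sum|a_k|\le1$ and $\|g_k\|=1$.

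\textbf{Step 1 (greedy correlation bound).} We show that $\langle r_{m-1},g_m\rangle\ge\langle r_{m-1},f\rangle$. By symmetry of $\mathcal D$ we have $s:=\sup_{g\in\mathcal D}\langle r_{m-1},g\rangle\ge0$. Take any finite $h=\sum_{k\in\Lambda} a_kg_k$ with $\sum|a_k|\le1$. Replacing $g_k$ by $\operatorname{sign}(a_k)g_k\in\mathcal D$ if needed, we get $\langle r_{m-1},h\rangle\le\sum|a_k|\,s\le s$. Passing to the closure gives the claim for $f$. Hence $\langle r_{m-1},f-g_m\rangle\le0$. We also record that $\|f-g_m\|\le\|f\|+1\le2$.

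\textbf{Step 2 (comparison with an arbitrary step).} For every $\gamma\in[0,1]$ we have $f-((1-\gamma)f_{m-1}+\gamma g_m)=(1-\gamma)r_{m-1}+\gamma(f-g_m)$. Since $\gamma_m$ minimizes this norm over $[0,1]$, we get
\[
a_m\le (1-\gamma)^2a_{m-1}+2\gamma(1-\gamma)\langle r_{m-1},f-g_m\rangle+\gamma^2\|f-g_m\|^2\le(1-\gamma)^2a_{m-1}+4\gamma^2 .
\]
The middle term is dropped using Step 1, since it is nonpositive for $\gamma\in[0,1]$.

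\textbf{Step 3 (choose $\gamma$ and iterate).} If $a_{m-1}=0$, then $a_m=0$ by taking $\gamma=0$, and the bound persists. Otherwise take $\gamma=a_{m-1}/(a_{m-1}+4)\in[0,1]$, which minimizes the quadratic. This yields $a_m\le\frac{4a_{m-1}}{a_{m-1}+4}$, hence $\frac1{a_m}\ge\frac1{a_{m-1}}+\frac14$. Induction from $a_0\le1$ completes the proof.

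The only step needing care is Step 1, namely justifying the comparison with $f$ when $f$ lies only in the closure of the symmetric convex hull. I would handle it by the finite-sum argument plus continuity of the inner product, as above.
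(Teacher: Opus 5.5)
Your proof is correct and follows essentially the same route as the paper: the greedy inequality $\langle r_{m-1},g_m\rangle\ge\langle r_{m-1},f\rangle$, comparison of the optimal step with a fixed $\gamma\in[0,1]$ using a norm bound of $2$, and the recursion $1/a_m\ge 1/a_{m-1}+1/4$ iterated from $a_0\le 1$. The differences are minor. You bound $\|f-g_m\|\le 2$ rather than $\|g_m-f_{m-1}\|\le 2$, and you take $\gamma=a_{m-1}/(a_{m-1}+4)$ rather than $a_{m-1}/4$, which gives the recursion exactly without needing $a_{m-1}\le 4$ from the monotonicity lemma or the inequality $1/(1-t)\ge 1+t$; your finite-sum-plus-continuity argument also spells out the closure step the paper leaves implicit.
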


\section{Proof of Theorem \ref{main}}\label{three}
To prove Theorem \ref{main}, we need to show the following technical lemma.

\begin{lemma}\label{technical}
For $\alpha>1$ we have
\[
P_\alpha:=\prod_{k=2}^\infty\Bigl(1-\frac1{k^\alpha}\Bigr)\in(0,1).
\]
\end{lemma}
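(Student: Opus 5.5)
The plan is to work with partial products and take logarithms, reducing convergence of the infinite product to convergence of a series. Set $P_N:=\prod_{k=2}^N(1-k^{-\alpha})$ for $N\ge 2$. Each factor satisfies $0<1-k^{-\alpha}<1$ because $k^{-\alpha}\le 2^{-\alpha}<1$. Hence $(P_N)$ is strictly decreasing and positive, so it converges to some $P_\alpha\in[0,P_2]$. Since $P_2=1-2^{-\alpha}<1$, we immediately get $P_\alpha<1$. The real content is therefore the strict lower bound $P_\alpha>0$.

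For positivity, write $\log P_N=\sum_{k=2}^N\log(1-k^{-\alpha})$ and bound each term from below. I will use the elementary inequality $\log(1-x)\ge -\frac{x}{1-x}$ for $x\in[0,1)$. It follows from $\log(1-x)=-\int_0^x\frac{dt}{1-t}$ together with $\frac1{1-t}\le\frac1{1-x}$ for $0\le t\le x$. Applying it with $x=k^{-\alpha}\le 2^{-\alpha}$ gives
\[
\log\Bigl(1-\frac1{k^\alpha}\Bigr)\ge -\frac{1}{1-2^{-\alpha}}\cdot\frac1{k^\alpha}.
\]
Summing over $k$ and using that $\sum_{k\ge2}k^{-\alpha}=\zeta(\alpha)-1<\infty$ for $\alpha>1$ (the $p$-series test), we get
\[
\log P_N\ge -\frac{\zeta(\alpha)-1}{1-2^{-\alpha}}=:-C_\alpha
\]
uniformly in $N$. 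Passing to the limit then yields $P_\alpha\ge e^{-C_\alpha}>0$.

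The only point needing care is the uniform lower bound on $\log(1-x)$. A bound like $\log(1-x)\ge -2x$, valid only for $x\le 1/2$, would also suffice because $2^{-\alpha}<1/2$; I will use whichever is cleaner. Everything else is monotone convergence and the convergence of the $p$-series for $\alpha>1$.

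An alternative that avoids logarithms is the Weierstrass product inequality $\prod(1-a_k)\ge 1-\sum a_k$. This alone fails when $\sum_{k\ge2}k^{-\alpha}\ge1$, which happens for $\alpha$ close to $1$. One can still use it on a tail $\prod_{k\ge K}$ with $\sum_{k\ge K}k^{-\alpha}<1$, and treat the finitely many remaining factors separately as positive numbers. I keep this as a backup.
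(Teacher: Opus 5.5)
Your proof is correct and follows essentially the same route as the paper: both take logarithms, use $\ln(1-x)\ge -x/(1-x)$ (the paper then specializes to $-2x$ for $x\le 1/2$, while you keep the constant $1/(1-2^{-\alpha})$), and use convergence of $\sum_{k\ge 2}k^{-\alpha}$ to bound the partial sums from below uniformly. Your bound $P_\alpha\le P_2=1-2^{-\alpha}<1$ states the upper inequality slightly more explicitly than the paper's remark that all factors are strictly less than $1$.
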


\begin{proof}
We write the product in terms of logarithms:
\[
\ln P_\alpha
=\sum_{k=2}^\infty \ln\Bigl(1-\frac1{k^\alpha}\Bigr).
\]
Each factor of the product satisfies $0<1-1/k^\alpha<1$, and therefore
\[
\ln\Bigl(1-\frac1{k^\alpha}\Bigr)<0\qquad (k\ge2),
\]
so that the partial sums
\[
S_N:=\sum_{k=2}^N \ln\Bigl(1-\frac1{k^\alpha}\Bigr)
\]
form a monotone decreasing sequence. In order to conclude that $\ln P_\alpha$
is a real number, it suffices to show that $(S_N)$ is bounded
from below.

To this end, we use a standard bound for the logarithm. For $x\in(0,1)$ one has
\[
\ln(1-x)\ge -\frac{x}{1-x}.
\]
In particular, if $0<x\le \tfrac12$, then $\frac1{1-x}\le2$ and hence
\[
\ln(1-x)\ge -2x.
\]
Taking $x=1/k^\alpha$, 
$1/k^\alpha\le\tfrac12$ for all $k\ge 2$, and therefore
\[
\ln\Bigl(1-\frac1{k^\alpha}\Bigr)
\;\ge\; -\frac{2}{k^\alpha},
\qquad k\ge 2.
\]
Thus, for every $N\ge 2$,
\[
S_N
=  \sum_{k=2}^{N} \ln\Bigl(1-\frac1{k^\alpha}\Bigr)
\;\ge\;
  - 2\sum_{k=2}^{N} \frac1{k^\alpha}.
\]
Since $\alpha>1$, the series $\sum_{k=2}^\infty k^{-\alpha}$ converges, and hence
the partial sums $\sum_{k=2}^{N} k^{-\alpha}$ are uniformly bounded in $N$.
Therefore, there exists a constant $C\in\mathbb R$ such that
\[
S_N \ge C \qquad\text{for all } N\ge 2.
\]
This shows that the sequence $(S_N)$ is decreasing and bounded from below, and
thus converges to some limit $S\in\mathbb R$:
\[
\sum_{k=2}^\infty \ln\Bigl(1-\frac1{k^\alpha}\Bigr) = S.
\]

Finally,
\[
P_\alpha
= \prod_{k=2}^\infty\Bigl(1-\frac1{k^\alpha}\Bigr)
= \exp\Bigl(\sum_{k=2}^\infty \ln\Bigl(1-\frac1{k^\alpha}\Bigr)\Bigr)
= e^S,
\]
and since $S$ is a real number, we have $e^S>0$. Moreover, as all the factors
$1-1/k^\alpha$ are strictly less than $1$, it follows that $P_\alpha<1$.
In conclusion, $P_\alpha\in(0,1)$.
\end{proof}

\begin{proof}[Proof of Theorem \ref{main}]
    Fix $\alpha>1$. Let $\mathbb H=\mathbb R^2$ with the Euclidean inner product and
\[
e_1=(1,0),\qquad e_2=(0,1),\qquad 
\mathcal D=\{\pm e_1,\pm e_2\}.
\]
Choose $b\in(0,1/2)$ and define
\[
f:=(1-b)e_1+be_2.
\]
Then $f\in A_1(\mathcal D)$ and, for the Power--Relaxed Greedy Algorithm with steps $\lambda_m:=m^{-\alpha}$, we will show that 
\[
\|f-\mathcal T_m^r(f)\|_2\ \ge\ \frac{b\,P_\alpha}{\sqrt2}\ >0,
\qquad m\ge 1,
\]
where
\[
P_\alpha:=\prod_{k=2}^\infty\Bigl(1-\frac1{k^\alpha}\Bigr)\in(0,1)
\]
using Lemma \ref{technical}. In particular, $\|f-\mathcal T_m^r(f)\|_2>0$ for every $m$, and the estimate
of Theorem~\ref{than} cannot hold for $\alpha>1$.

Since $f=(1-b)e_1+be_2$ with $(1-b)+b=1$, we have $f\in A_1(\mathcal D)$.
Moreover, because $b<1/2$ we have $\langle f,e_1\rangle=1-b>b=\langle f,e_2\rangle$,
so the first greedy atom is uniquely $g(f)=e_1$ and hence
\[
\mathcal T_1^r(f)=G_1(f)=\langle f,e_1\rangle e_1=(1-b)e_1.
\]

For $m\ge 2$, the algorithm has the form
\[
\mathcal T_m^r(f)=(1-\lambda_m)\mathcal T_{m-1}^r(f)+\lambda_m\,g(\mathcal R_{m-1}^r(f)),
\qquad \lambda_m=\frac1{m^\alpha},
\]
and $g(\mathcal R_{m-1}^r(f))\in\mathcal D$.  Note that every atom in $\mathcal D$
has $\|\cdot\|_1$-norm equal to $1$, i.e. $\|d\|_1=1$ for all $d\in\mathcal D$.

Set $s_m:=\|\mathcal T_m^r(f)\|_1$. Using the triangle inequality for $\|\cdot\|_1$,
\[
s_m=\|(1-\lambda_m)\mathcal T_{m-1}^r(f)+\lambda_m g(\mathcal R_{m-1}^r(f))\|_1
\le (1-\lambda_m)s_{m-1}+\lambda_m\|g(\mathcal R_{m-1}^r(f))\|_1
=(1-\lambda_m)s_{m-1}+\lambda_m.
\]
Define $u_m:=1-s_m$. Then
\[
u_m=1-s_m\ \ge\ 1-\bigl[(1-\lambda_m)s_{m-1}+\lambda_m\bigr]
=(1-\lambda_m)(1-s_{m-1})=(1-\lambda_m)u_{m-1}.
\]
Iterating from $m=2$ gives
\[
u_m\ge u_1\prod_{k=2}^m(1-\lambda_k).
\]
Since $u_1=1-\|\mathcal T_1^r(f)\|_1=1-(1-b)=b$, we get
\[
1-\|\mathcal T_m^r(f)\|_1=u_m\ \ge\ b\prod_{k=2}^m\Bigl(1-\frac1{k^\alpha}\Bigr)
\ \ge\ b\prod_{k=2}^\infty\Bigl(1-\frac1{k^\alpha}\Bigr)=bP_\alpha.
\]

Now consider the unit vector $v=(1,1)/\sqrt2$. For any $x\in\mathbb R^2$,
\[
\langle x,v\rangle=\frac{x_1+x_2}{\sqrt2}\le \frac{|x_1|+|x_2|}{\sqrt2}
=\frac{\|x\|_1}{\sqrt2}.
\]
Hence,
\[
\langle \mathcal T_m^r(f),v\rangle\le \frac{\|\mathcal T_m^r(f)\|_1}{\sqrt2}
\le \frac{1-bP_\alpha}{\sqrt2}.
\]
But $\langle f,v\rangle=\frac{(1-b)+b}{\sqrt2}=\frac1{\sqrt2}$, so
\[
\langle f-\mathcal T_m^r(f),v\rangle
=\langle f,v\rangle-\langle \mathcal T_m^r(f),v\rangle
\ge \frac{bP_\alpha}{\sqrt2}.
\]
Since $\|v\|_2=1$, by Cauchy--Schwarz
\[
\|f-\mathcal T_m^r(f)\|_2\ \ge\ \langle f-\mathcal T_m^r(f),v\rangle
\ \ge\ \frac{bP_\alpha}{\sqrt2}\ >0,
\]
for every $m\ge 1$, as claimed.
\end{proof}

\section{Numerical simulation}\label{four}
To illustrate numerically the lack of convergence of the algorithm,
based on the previous section, we consider $f=(1/2,1/2)$ and perform
simulations of the algorithm for different values of $\alpha$.
The results presented below can be found in \cite{code}.

\begin{verbatim}
--- Results of the simulation ---
Alpha=1.1: Final Error ||f - T_500|| = 0.003805
Alpha=1.5: Final Error ||f - T_500|| = 0.068021
Alpha=2.0: Final Error ||f - T_500|| = 0.177130
\end{verbatim}
\begin{center}
    \includegraphics[scale=0.5]{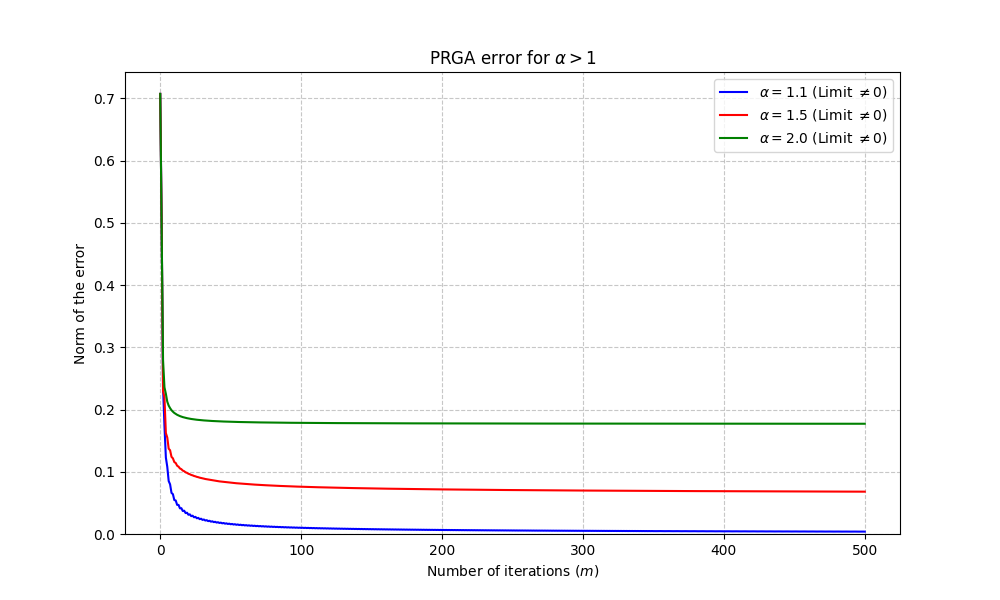}
\end{center}

\section{The Convex-Relaxed Greedy Algorithm}\label{five}
Before studying the convergence result of the CRGA, we will provide an explicit value for the parameter $\gamma$ that defines this algorithm. For this, we need the following lemma.

\begin{lemma}[Optimal choice of the relaxation parameter]\label{lemma:gamma}
Let $r\in\mathbb H$ and $d\in\mathbb H$ with $d\neq 0$. Consider the function
\[
\Phi(\gamma):=\|r-\gamma d\|^2,\qquad \gamma\in\mathbb R.
\]
Then:

\begin{enumerate}
    \item $\Phi$ is strictly convex and its unique global minimizer over $\mathbb R$ is
    \[
    \gamma^\ast=\frac{\langle r,d\rangle}{\|d\|^2}.
    \]

    \item The constrained problem
    \[
    \min_{\gamma\in[0,1]} \Phi(\gamma)
    \]
    has a unique solution $\gamma_m\in[0,1]$, given by
    \[
    \gamma_m=\Pi_{[0,1]}(\gamma^\ast)
    =\min\{1,\max\{0,\gamma^\ast\}\}.
    \]

    \item For every $\gamma\in[0,1]$, the following inequality holds:
    \[
    \Phi(\gamma_m)\le \Phi(\gamma).
    \]
\end{enumerate}
\end{lemma}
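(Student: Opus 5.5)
The approach is to expand $\Phi$ as an explicit quadratic polynomial in $\gamma$ and read off all three claims from elementary one-variable calculus. Since $\mathbb H$ is real here, bilinearity and symmetry of the inner product give
\[
\Phi(\gamma)=\|r\|^2-2\gamma\langle r,d\rangle+\gamma^2\|d\|^2 .
\]
Because $d\neq 0$, the leading coefficient $\|d\|^2$ is strictly positive. Hence $\Phi''(\gamma)=2\|d\|^2>0$, and $\Phi$ is strictly convex. Setting $\Phi'(\gamma)=-2\langle r,d\rangle+2\gamma\|d\|^2=0$ yields the unique critical point $\gamma^\ast=\langle r,d\rangle/\|d\|^2$. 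By strict convexity this critical point is the unique global minimizer on $\mathbb R$. Alternatively, completing the square gives
\[
\Phi(\gamma)=\|d\|^2(\gamma-\gamma^\ast)^2+\|r\|^2-\langle r,d\rangle^2/\|d\|^2,
\]
which displays both facts at once. This proves item 1.

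For item 2, I will use the completed-square form: on $[0,1]$, minimizing $\Phi$ is equivalent to minimizing $|\gamma-\gamma^\ast|$ over the compact interval. Existence of a minimizer follows from continuity on a compact set, and uniqueness from strict convexity, since two distinct minimizers would give a strictly smaller value at their midpoint. To identify the minimizer, I will split into three cases.
\begin{itemize}
\item If $\gamma^\ast\in[0,1]$, the global minimizer lies in the interval, so $\gamma_m=\gamma^\ast$.
\item If $\gamma^\ast<0$, then $\Phi$ is increasing on $[\gamma^\ast,\infty)\supset[0,1]$, since $\Phi'(\gamma)=2\|d\|^2(\gamma-\gamma^\ast)>0$ there. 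So the minimum is attained at $\gamma_m=0$.
\item If $\gamma^\ast>1$, then $\Phi$ is decreasing on $[0,1]$, so $\gamma_m=1$.
\end{itemize}
All three cases are summarized by the single formula $\gamma_m=\min\{1,\max\{0,\gamma^\ast\}\}$.

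Item 3 is simply a restatement of the fact that $\gamma_m$ minimizes $\Phi$ over $[0,1]$.

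No step here is a genuine obstacle. The only points needing care are, first, that the hypothesis $d\neq0$ is what guarantees strict convexity and uniqueness, and second, the monotonicity argument in the boundary cases. Afterward I would note how the lemma is applied to the CRGA: take $r=r_{m-1}$ and $d=g_m-f_{m-1}$, so that $f-((1-\gamma)f_{m-1}+\gamma g_m)=r-\gamma d$. If $d=0$, then every $\gamma$ is optimal and the update is trivial.
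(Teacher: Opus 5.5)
Your proposal is correct and follows essentially the same route as the paper: expand $\Phi$ as a quadratic with leading coefficient $\|d\|^2>0$, find the critical point $\gamma^\ast$, and obtain the constrained minimizer by projecting onto $[0,1]$. The only difference is that you justify the projection step explicitly, via the completed square, existence and uniqueness, and the three-case monotonicity argument, whereas the paper simply asserts that minimizing a convex function over an interval amounts to projecting its global minimizer.
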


\begin{proof}
The function $\Phi$ is a quadratic polynomial in $\gamma$ with leading coefficient
$\|d\|^2>0$, and therefore it is strictly convex. Differentiating, we obtain
\[
\Phi'(\gamma)=-2\langle r,d\rangle+2\gamma\|d\|^2,
\]
and setting the derivative equal to zero yields the unique critical point
\[
\gamma^\ast=\frac{\langle r,d\rangle}{\|d\|^2},
\]
which is the global minimizer of $\Phi$ over $\mathbb R$.

Since $\Phi$ is convex, its minimization over the closed and convex interval
$[0,1]$ is obtained by projecting the global minimizer onto this interval.
This yields the expression
\[
\gamma_m=\Pi_{[0,1]}(\gamma^\ast).
\]

Finally, since $\gamma_m$ is the minimizer of $\Phi$ on $[0,1]$, we have
\[
\Phi(\gamma_m)\le \Phi(\gamma), \qquad \forall\,\gamma\in[0,1],
\]
which completes the proof.
\end{proof}

Then, for the CRGA, we define
\[
d_m:=g_m-f_{m-1}.
\]
For $\gamma\in[0,1]$, we have
\[
f-\bigl((1-\gamma)f_{m-1}+\gamma g_m\bigr)
= r_{m-1}-\gamma d_m.
\]
Therefore, the functional we want to minimize is
\[
\Phi(\gamma)
:=\|r_{m-1}-\gamma d_m\|^2
= \|r_{m-1}\|^2
-2\gamma\langle r_{m-1}, d_m\rangle
+\gamma^2\|d_m\|^2.
\]

This is a strictly convex quadratic function in $\gamma$. Its derivative is
\[
\Phi'(\gamma)
=-2\langle r_{m-1}, d_m\rangle
+2\gamma\|d_m\|^2.
\]
Setting the derivative equal to zero, we obtain the unconstrained minimizer
\[
\gamma^\ast
=\frac{\langle r_{m-1}, d_m\rangle}{\|d_m\|^2},
\]
provided that $d_m\neq 0$. Since the problem is constrained to the interval $[0,1]$, the desired minimizer is the projection of $\gamma^\ast$ onto this interval:
\[
\gamma_m
=\min\left\{ 1, \max\left\{0,
\frac{\langle r_{m-1}, d_m\rangle}{\|d_m\|^2}
\right\}\right\}.
\]
%\Pi_{[0,1]}\!\left(
%\frac{\langle r_{m-1}, d_m\rangle}{\|d_m\|^2}
%\right)
If $d_m=0$, then $g_m=f_{m-1}$ and any $\gamma_m\in[0,1]$ is optimal; in this case we take $\gamma_m=0$.

%Recall that, since $\mathcal D$ is symmetric, we have
%\[
%A_1(\mathcal D)=\overline{\operatorname{conv}}(\mathcal D).
%\]
Also, for the convergence of the CRGA, we need to use the following property.

\begin{lemma}\label{lemma:am-monotone}
Let $\mathcal D$ be a symmetric dictionary in a Hilbert space $\mathbb H$, let
$f\in A_1(\mathcal D)$, and let $(f_m)_{m\ge 0}$ be the sequence generated by the
relaxed greedy algorithm with optimal step size, with $f_0=0$. Define
\[
a_m:=\|f-f_m\|^2,\qquad m\ge 0.
\]
Then the sequence $(a_m)_{m\ge 0}$ satisfies the following properties:
\begin{enumerate}
    \item $(a_m)$ is nonincreasing, i.e.,
    \[
    a_m\le a_{m-1}\qquad \text{for all } m\ge 1.
    \]
    \item $(a_m)$ is uniformly bounded, and in fact
    \[
    0\le a_m\le a_0=\|f\|^2\le 1
    \qquad \text{for all } m\ge 0.
    \]
\end{enumerate}
\end{lemma}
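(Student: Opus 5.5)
Monotonicity follows from the choice of $\gamma_m$ in the CRGA, and the bound $\|f\|\le 1$ follows from $f\in A_1(\mathcal D)$.

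\emph{Part 1 (monotonicity).} Fix $m\ge 1$ and let $d_m=g_m-f_{m-1}$ as above, so that for every $\gamma\in[0,1]$
\[
f-\bigl((1-\gamma)f_{m-1}+\gamma g_m\bigr)=r_{m-1}-\gamma d_m .
\]
With $\Phi(\gamma)=\|r_{m-1}-\gamma d_m\|^2$ we have $a_m=\Phi(\gamma_m)$. Since $\gamma=0$ is admissible, the minimality of $\gamma_m$ on $[0,1]$ gives
\[
a_m=\Phi(\gamma_m)\le \Phi(0)=\|r_{m-1}\|^2=a_{m-1}.
\]
If $d_m\neq 0$, this minimality is item 3 of Lemma \ref{lemma:gamma}. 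If $d_m=0$, the convention $\gamma_m=0$ gives $\Phi(\gamma_m)=\Phi(0)$ directly. This step does not use the greedy selection of $g_m$ at all.

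\emph{Part 2 (bounds).} Nonnegativity of $a_m$ is trivial. Induction with Part 1 gives $a_m\le a_0$, and $a_0=\|f-0\|^2=\|f\|^2$. It remains to show $\|f\|\le 1$.

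Take first $f$ in the generating set, $f=\sum_{k\in\Lambda}a_kg_k$ with $\sum|a_k|\le 1$. Since every atom has unit norm, the triangle inequality gives
\[
\|f\|\le\sum_{k\in\Lambda}|a_k|\,\|g_k\|=\sum_{k\in\Lambda}|a_k|\le 1 .
\]
For a general $f\in A_1(\mathcal D)$, which is a limit of such elements, continuity of the norm preserves the inequality $\|f\|\le 1$. Hence $0\le a_m\le a_0=\|f\|^2\le 1$.

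The only place needing care is this closure argument and the degenerate case $d_m=0$; neither is a real obstacle. Symmetry of $\mathcal D$ is not needed for this lemma.
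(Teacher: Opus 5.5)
Your proof is correct and follows the paper's argument: monotonicity comes from comparing the optimal step $\gamma_m$ with the admissible choice $\gamma=0$, and the bound $a_m\le a_0=\|f\|^2\le 1$ follows from $\|f\|\le 1$ for $f\in A_1(\mathcal D)$. You are more careful than the paper in two places, justifying $\|f\|\le 1$ via the triangle inequality plus a closure argument and treating the degenerate case $d_m=0$ explicitly, but the route is the same.
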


\begin{proof}
By definition of the algorithm, the parameter $\gamma_m\in[0,1]$ is chosen so as to minimize
\[
\|f-\bigl((1-\gamma)f_{m-1}+\gamma g_m\bigr)\|^2
\quad \text{over } \gamma\in[0,1].
\]
Evaluating the above expression at $\gamma=0$ yields
\[
a_m
=\min_{\gamma\in[0,1]}\|f-\bigl((1-\gamma)f_{m-1}+\gamma g_m\bigr)\|^2
\le \|f-f_{m-1}\|^2
= a_{m-1},
\]
which proves the monotonicity.

Since $f\in A_1(\mathcal D)$ and
$\|g\|=1$ for all $g\in\mathcal D$, we have $\|f\|\le 1$. Therefore
\[
a_m\le a_0=\|f\|^2\le 1
\quad \text{for all } m\ge 0,
\]
which completes the proof.
\end{proof}

\begin{proof}[Proof of Theorem \ref{thcrga}]
We define
\[
a_m:=\|r_m\|^2=\|f-f_m\|^2.
\]

\medskip
%=\overline{\operatorname{conv}}(\mathcal D)
\noindent\emph{Step 1.} Since $f\in A_1(\mathcal D)$ and the functional
$g\mapsto\langle r_{m-1},g\rangle$ is linear, we have
\[
\langle r_{m-1}, g_m\rangle
=\sup_{g\in\mathcal D}\langle r_{m-1}, g\rangle
\ge \langle r_{m-1}, f\rangle.
\]
Subtracting $\langle r_{m-1},f_{m-1}\rangle$ from both sides, we obtain
\[
\langle r_{m-1}, g_m-f_{m-1}\rangle
\ge \langle r_{m-1}, f-f_{m-1}\rangle
= \|r_{m-1}\|^2
= a_{m-1}.
\]
Denoting $d_m:=g_m-f_{m-1}$, this yields
\begin{equation}\label{gap}
\langle r_{m-1}, d_m\rangle \ge a_{m-1}.
\end{equation}

\medskip
\noindent\emph{Step 2.} Since $f_{m-1}\in\operatorname{conv}(\mathcal D)$ and $\|g\|=1$ for every
$g\in\mathcal D$, it follows that $\|f_{m-1}\|\le 1$. Consequently,
\begin{equation}\label{dm}
\|d_m\|=\|g_m-f_{m-1}\|\le \|g_m\|+\|f_{m-1}\|\le 2.
\end{equation}

\medskip
\noindent\emph{Step 3.} For $\gamma\in[0,1]$, we define
\[
\Phi(\gamma):=\|r_{m-1}-\gamma d_m\|^2.
\]
By Lemma~\ref{lemma:gamma}, the parameter $\gamma_m$ defined by the algorithm satisfies
\[
a_m=\Phi(\gamma_m)\le \Phi(\gamma),
\qquad \forall\,\gamma\in[0,1].
\]

Using \eqref{gap} and \eqref{dm}, for all $\gamma\in[0,1]$ we obtain
\[
\Phi(\gamma)
= a_{m-1}-2\gamma\langle r_{m-1},d_m\rangle+\gamma^2\|d_m\|^2
\le a_{m-1}-2\gamma a_{m-1}+4\gamma^2.
\]

Using Lemma \ref{lemma:am-monotone}, we know that 
\[
\gamma=\frac{a_{m-1}}{4}\in[0,1].
\]
Then, taking this value, we conclude that
\[
a_m\le a_{m-1}-\frac{a_{m-1}^2}{4}.
\]

\medskip
\noindent\emph{Step 4.} From the previous inequality and using that $\dfrac{1}{1-t}\geq 1+t$ for any $t\in [0,1)$ it follows that
\[
\frac{1}{a_m}
\ge \frac{1}{a_{m-1}}+\frac{1}{4}.
\]
Iterating and using the fact that $a_0=\|f\|^2\le 1$, we obtain
\[
\frac{1}{a_m}\ge \frac{m+4}{4},
\]
which implies
\[
a_m\le \frac{4}{m+4}.
\]
\end{proof}

The optimality of this result is due to the fact that it is known that the bound $m^{-1/2}$ cannot be improved. In fact, it is known that for any greedy-type algorithm one has the following lower bound, which we are going to prove in case the reader is not familiar with it. 

\begin{proposition}\label{prop:lowerbound}
For each $m\in\mathbb N$ with $m\geq 2$ there exist a Hilbert space $\mathbb H$,
a dictionary $\mathcal D\subset\mathbb H$, and an element $f\in A_1(\mathcal D)$ such that
any approximation $s$ using at most $m$ elements from $\mathcal D$ satisfies
\[
\|f-s\|\ \ge\ \dfrac{m^{-1/2}}{\sqrt{2}}.
\]
\end{proposition}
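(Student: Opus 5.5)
The plan is an explicit construction. I avoid the plain orthonormal example $f=\frac1n\sum_{k\le n}e_k$: its best $m$-term error is $\sqrt{(n-m)}/n$, which is at most $\frac{1}{2}m^{-1/2}$ for every $n$, so it cannot reach $m^{-1/2}/\sqrt2$. Instead I use a family of equi-correlated atoms, whose common component can only be captured at a price.

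\textbf{Construction.} Let $\mathbb H=\ell^2$ with orthonormal vectors $u,e_1,e_2,\dots$. For a parameter $c\in(0,1)$ set
\[
g_k:=\sqrt c\,u+\sqrt{1-c}\,e_k,\qquad k\ge 1,
\]
so that $\|g_k\|=1$ and $\langle g_i,g_j\rangle=c$ for $i\ne j$. Take as dictionary the set $\{\pm g_k\}_{k\ge1}$ together with $\pm$ a countable family of unit vectors that completes it to have dense span and is orthogonal to $u$ and to all $e_k$; these extra atoms can only increase the distance, so they are harmless. Put $f_n:=\frac1n\sum_{k=1}^n g_k\in A_1(\mathcal D)$. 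Since $A_1(\mathcal D)$ is closed, the limit $f:=\sqrt c\,u$ also belongs to $A_1(\mathcal D)$.

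\textbf{Distance to an $m$-term span.} By symmetry, and because the auxiliary atoms are orthogonal to $f$, it suffices to take $s\in\operatorname{span}\{g_1,\dots,g_m\}$, written $s=\sum_{k\le m}a_kg_k$. Then
\[
\|f-s\|^2=c\Bigl(1-\sum_k a_k\Bigr)^2+(1-c)\sum_k a_k^2 .
\]
By Cauchy--Schwarz, $\sum_k a_k^2\ge A^2/m$ with $A=\sum_k a_k$. This reduces the problem to a one-variable quadratic in $A$, whose minimum is
\[
\frac{c(1-c)}{1+c(m-1)} .
\]
Choosing $c=1/(\sqrt m+1)$, which is the maximizer found by differentiation, gives the lower bound $\|f-s\|\ge 1/(\sqrt m+1)$ for every $s$ using at most $m$ atoms. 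Finally, $1/(\sqrt m+1)\ge 1/\sqrt{2m}$ is equivalent to $(\sqrt2-1)\sqrt m\ge1$, which holds for all $m\ge 6$.

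\textbf{Main obstacle: small $m$.} The remaining cases $2\le m\le5$ are the hard part. I would first try to enlarge the construction to a two-parameter family: keep $n$ finite in $f_n=\frac1n\sum_{k\le n}g_k$ and optimize jointly over $n$ and $c$. The squared error then splits as
\[
(1-c)\frac{n-m}{n^2}+\frac{c(1-c)(1-m/n)^2}{cm+1-c}.
\]
Failing that, I would use dictionaries with a block structure, for instance several mutually orthogonal correlated families, and run a case check. I am genuinely unsure that $1/(2m)$ is attainable for $m=2$. If these optimizations fall short, the fallback is to prove the statement for $m\ge6$ as above and treat the finitely many small values of $m$ by a separate explicit example. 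That small-$m$ step is where I expect the real difficulty to lie.
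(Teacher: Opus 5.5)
Your computation is correct, and for $m\ge 6$ it proves the stated bound, which the paper does not. With $g_k=\sqrt c\,u+\sqrt{1-c}\,e_k$ and $f=\sqrt c\,u\in A_1(\mathcal D)$ you get $\|f-s\|\ge 1/(\sqrt m+1)$ for every $s$ built from at most $m$ atoms. Moreover $1/(\sqrt m+1)\ge 1/\sqrt{2m}$ exactly when $m\ge 6$. (The auxiliary atoms are unnecessary: $u=\lim_n f_n/\sqrt c$ already lies in the closed span of the $g_k$, hence so does every $e_k$.)

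Your objection to the orthonormal example is exactly what goes wrong in the paper. The paper uses $f=\frac{1}{2m}\sum_{i=1}^{2m}e_i$ in $\mathbb R^{2m}$, and its argument ends with $\|f-s\|\ge\frac{1}{2\sqrt m}$. This is strictly weaker than the claimed $\frac{1}{\sqrt2}m^{-1/2}$, so the paper's proof only delivers the constant $\frac12$.

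The gap you flag for $2\le m\le 5$ is real, but no special example can close it: the statement is false there, and your construction is already extremal. Here is why.

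By symmetry of $\mathcal D$, every element of the set whose closure is $A_1(\mathcal D)$ can be written as $f=\sum_k c_kh_k$ with $h_k\in\mathcal D$, $c_k\ge 0$, $\sum_k c_k=1$. Put $t=\|f\|^2$. Let $\xi_1,\dots,\xi_m$ be independent random atoms with $\xi_i=h_k$ with probability $c_k$, and let $\bar\xi=\frac1m\sum_i\xi_i$. Then $\mathbb E\bar\xi=f$ and $\mathbb E\|\bar\xi\|^2=\frac{1+(m-1)t}{m}$, so
\[
\min_{\lambda\in\mathbb R}\mathbb E\|f-\lambda\bar\xi\|^2=\frac{t(1-t)}{1+(m-1)t}\le\frac{1}{(\sqrt m+1)^2}.
\]
The inequality is equivalent to $\bigl((\sqrt m+1)t-1\bigr)^2\ge 0$; it is the same one-variable function you maximized.

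Since $\lambda\bar\xi$ uses at most $m$ atoms, and the best $m$-term error is $1$-Lipschitz in $f$, the conclusion passes to the closure. So every $f\in A_1(\mathcal D)$, for every dictionary in every Hilbert space, admits $m$-term approximants with error at most $1/(\sqrt m+1)+\varepsilon$ for any $\varepsilon>0$. As $1/(\sqrt m+1)<1/\sqrt{2m}$ for $m\le 5$ (e.g.\ $\sqrt2-1<\frac12$ for $m=2$), the proposition fails for $m=2,3,4,5$.

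The correct picture is that the worst case is exactly $1/(\sqrt m+1)$, attained by your family. The constant $\frac{1}{\sqrt2}$ holds precisely for $m\ge 6$, by your argument. The constant $\frac12$ that the paper's proof actually gives holds for all $m$.
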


\begin{proof}
Fix $m\in\mathbb N$, $m\ge 2$, and set $n:=2m$. Let $\mathbb H=\mathbb R^n$
equipped with the Euclidean norm, and let
\[
\mathcal D:=\{\pm e_1,\pm e_2,\ldots,\pm e_n\},
\]
where $(e_i)_{i=1}^n$ denotes the canonical basis of $\mathbb R^n$.

Define
\[
f:=\frac{1}{2m}\sum_{i=1}^{2m} e_i.
\]
Then $f$ has a representation of the form
\[
f=\sum_{i=1}^{2m} a_i g_i,
\qquad a_i=\frac{1}{2m},\quad g_i=e_i\in\mathcal D,
\]
and therefore
\[
\sum_{i=1}^{2m} |a_i| = 2m\cdot\frac{1}{2m}=1.
\]
By the definition of the atomic class $A_1(\mathcal D)$, this shows that
$f\in A_1(\mathcal D)$. Moreover,
\[
\|f\|_2^2
= \sum_{i=1}^{2m}\left(\frac{1}{2m}\right)^2
= 2m\cdot\frac{1}{4m^2}
= \frac{1}{2m}\le 1,
\]
so $f$ lies in the unit ball of $\mathbb H$.

Let $s\in\mathbb R^n$ be any vector that can be written as a linear combination of
at most $m$ elements from $\mathcal D$. Equivalently, $s$ has at most $m$ nonzero
coordinates. Denote by $\operatorname{supp}(s)$ the support of $s$, with
$|\operatorname{supp}(s)|\le m$.

For $i\notin \operatorname{supp}(s)$ we have $s_i=0$ and $f_i=1/(2m)$, so
\[
\|f-s\|_2^2
= \sum_{i\notin \operatorname{supp}(s)} |f_i-s_i|^2
 +\sum_{i\in \operatorname{supp}(s)} |f_i-s_i|^2
\;\ge\; \sum_{i\notin \operatorname{supp}(s)} |f_i|^2.
\]
Since at most $m$ indices belong to $\operatorname{supp}(s)$, at least $m$
indices do not, and hence
\[
\|f-s\|_2^2
\;\ge\; m\left(\frac{1}{2m}\right)^2
= \frac{1}{4m}.
\]
Therefore
\[
\|f-s\|_2 \ge \frac{1}{2\sqrt m},
\]
for every $s$ that is a linear combination of at most $m$ elements of $\mathcal D$.

Since the $m$-th iterate produced by any greedy-type algorithm uses at most $m$
elements from $\mathcal D$, the same lower bound applies to the corresponding
approximation.
\end{proof}

\section*{Funding}
Both authors were supported by Grant PID2022-142202NB-I00 (Agencia Estatal de Investigación, Spain).

%\bibliographystyle{siamplain}
%\bibliography{references}
\end{document}